\documentclass[12pt, reqno]{amsart}

\author{Paolo Leonetti$^\dagger\,$}
\thanks{$\dagger\,$P.~Leonetti is supported by the Austrian Science Fund (FWF), project F5512-N26}
\address{Graz University of Technology, Institute of Analysis and Number Theory, Graz, Austria}
\email{leonetti.paolo@gmail.com}

\author{Carlo Sanna$^\ddagger$}
\thanks{$\ddagger\,$C.~Sanna is supported by a postdoctoral fellowship of INdAM and is a member of the INdAM group GNSAGA}
\address{Universit\`a degli Studi di Genova\\Department of Mathematics\\Genova, Italy}
\email{carlo.sanna.dev@gmail.com}

\keywords{Accumulation points; closure; ratio sets}
\subjclass[2010]{Primary: 11B05, Secondary: 11A99.}

\title[Directions sets: A generalization of ratio sets]{Directions sets: A generalization of ratio sets}

\usepackage{amsmath}
\usepackage{amssymb}
\usepackage{amsthm}
\usepackage{bm} 
\usepackage{cite}
\usepackage{enumitem}
\usepackage[left=2.5cm, right=2.5cm, top=2.5cm, bottom=2.5cm]{geometry}
\usepackage{fancyhdr}
\usepackage{color}
\usepackage{hyperref}
\hypersetup{colorlinks=true}

\newtheorem{thm}{Theorem}[section]
\newtheorem{pro}[thm]{Proposition}
\newtheorem{cor}[thm]{Corollary}
\theoremstyle{remark}
\newtheorem{rmk}{Remark}[section]
\newtheorem{que}{Question}[section]
\newtheorem{claim}{\textsc{Claim}}

\usepackage{graphicx}

\newcommand\vect[1]{\boldsymbol{#1}}
\newcommand\accto{\mathrel{\ooalign{$\rightarrow$\cr%
  \kern.9ex\raise.7ex\hbox{\scalebox{0.500}[0.500]{$\bullet$}}\cr}}}
  
\pagestyle{fancy}
\fancyhf{}
\fancyhead[CO]{\textsc{Direction sets: A generalization of ratio sets}}
\fancyhead[CE]{\textsc{P.~Leonetti} and \textsc{C.~Sanna}}
\fancyhead[RO,LE]{\thepage}

\setlength{\headheight}{12pt}

\hypersetup{
    pdftitle={{\textsc{Direction Sets: A Generalization of Ratio Sets}}},
    pdfauthor={Paolo Leonetti and Carlo Sanna},
    pdfmenubar=false,
    pdffitwindow=true,
    pdfstartview=FitH,
    colorlinks=true,
    linkcolor=blue,
    citecolor=green,
    urlcolor=cyan
}

\uchyph=0

\begin{document}

\maketitle

\begin{abstract}
For every integer $k \geq 2$ and every $A \subseteq \mathbb{N}$, we define the \emph{$k$-directions sets} of $A$ as $D^k(A) := \{\vect{a} / \|\vect{a}\| : \vect{a} \in A^k\}$ and $D^{\underline{k}}(A) := \{\vect{a} / \|\vect{a}\| : \vect{a} \in A^{\underline{k}}\}$, where $\|\cdot\|$ is the Euclidean norm and $A^{\underline{k}} := \{\vect{a} \in A^k : a_i \neq a_j \text{ for all } i \neq j\}$.
Via an appropriate homeomorphism, $D^k(A)$ is a generalization of the \emph{ratio set} $R(A) := \{a / b : a,b \in A\}$, which has been studied by many authors.
We study $D^k(A)$ and $D^{\underline{k}}(A)$ as subspaces of $S^{k-1} := \{\vect{x} \in [0,1]^k : \|\vect{x}\| = 1\}$.
In~particular, generalizing a result of Bukor and T\'oth, we provide a characterization of the sets $X \subseteq S^{k-1}$ such that there exists $A \subseteq \mathbb{N}$ satisfying $D^{\underline{k}}(A)^\prime = X$, where $Y^\prime$ denotes the set of accumulation points of $Y$. 
Moreover, we provide a simple sufficient condition for $D^k(A)$ to be dense in $S^{k-1}$.
We conclude leaving some questions for further research.
\end{abstract}

\section{Introduction}

Given $A\subseteq \mathbb{N}$, its \emph{ratio set} is defined as $R(A) := \{a / b : a,b \in A\}$.
The study of the topological properties of $R(A)$ as a subspace of $[0, +\infty]$, especially the question of when $R(A)$ is dense in $[0, +\infty]$, is a classical topic and has been considered by many researchers \cite{MR3229105, MR1390582, MR1475512, MR2843990, MR2549381, MR1197643, MR1659159, MR1904872, MR0242756, MR0248107,MR1635220,MR1328019}.
More recently, some authors have also studied $R(A)$ as a subspace of the $p$-adic numbers~$\mathbb{Q}_p$~\cite{Preprint1, MR3670202, MR3593645, MR3906498, InPress1, MR3668396}.

We consider a further variation on this theme, which stems from the following easy observation: 
We have that $[0,+\infty]$ is homeomorphic to $S^1 := \{\vect{x} \in [0,1]^2 : \|\vect{x}\| = 1\}$ via the map $x \mapsto (1, x) / \|(1,x)\|$, if $x \in {[0,+\infty)}$, and $+\infty \mapsto (0,1)$. 
This sends $R(A)$ onto $D^2(A) := \{\rho(\vect{a}): \vect{a} \in A^2\}$, where $\rho(\vect{a}):=\vect{a} / \|\vect{a}\|$ for each $\vect{a} \neq \vect{0}$. 
Hence, topological questions about $R(A)$ as a subspace of $[0, +\infty]$ are equivalent to questions about $D^2(A)$ as a subspace of $S^1$.
The novelty of this approach is that it can be generalized to higher dimensions.
For every integer $k \geq 2$, define the \emph{$k$-directions sets} of $A$ as 
\begin{equation*}
D^k(A) := \{\rho(\vect{a}): \vect{a} \in A^k\}\,\,\,\,\text{ and }\,\,\,\,D^{\underline{k}}(A) := \{\rho(\vect{a}): \vect{a} \in A^{\underline{k}}\},
\end{equation*}
where for every set $B$ we let $B^{\underline{k}} := \{\vect{b} \in B^k : b_i \neq b_j\text{ for all } i \neq j\}$ denote the set of $k$-tuples with pairwise distinct entries in $B$.
Put also $S^{k-1} := \{\vect{x} \in [0,1]^k : \|\vect{x}\| = 1\}$.
We shall study $D^k(A)$ and $D^{\underline{k}}(A)$ as subspaces of $S^{k-1}$.

Bukor and T\'{o}th~\cite{MR1390582} characterized the subsets of $[0, +\infty]$ that are equal to $R(A)^\prime$ for some $A \subseteq \mathbb{N}$, where $Y^\prime$ denotes the set of accumulation points of $Y$.
In terms of $D^2(A)$, via the homeomorphism $[0,+\infty] \to S^1$ mentioned above, their result is the following:

\begin{thm}\label{thm:ratioacc}
Let $X \subseteq S^1$.
Then there exists $A \subseteq \mathbb{N}$ such that $X = D^2(A)^\prime$ if and only if the following conditions are satisfied:
\begin{enumerate}[label={\rm \textup{(}\roman*\textup{)}}]
\item $X$ is closed;
\item $(x_1, x_2) \in X$ implies $(x_2, x_1) \in X$;
\item if $X$ is nonempty, then $(1, 0) \in X$.
\end{enumerate}
\end{thm}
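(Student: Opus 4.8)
The plan is to reduce Theorem~\ref{thm:ratioacc} to the classical characterization for ratio sets and then to prove the latter directly. Recall from the Introduction the homeomorphism $h\colon[0,+\infty]\to S^1$ with $h(x)=(1,x)/\|(1,x)\|$ for $x\in[0,+\infty)$ and $h(+\infty)=(0,1)$, which satisfies $h(R(A))=D^2(A)$; since a homeomorphism maps accumulation points to accumulation points, $h(R(A)^\prime)=D^2(A)^\prime$. A routine check shows that $h$ transports the three conditions on $X\subseteq S^1$ into the following conditions on $\widetilde X:=h^{-1}(X)\subseteq[0,+\infty]$: $\widetilde X$ is closed; $\widetilde X$ is invariant under $x\mapsto 1/x$; and $0\in\widetilde X$ whenever $\widetilde X\ne\varnothing$ (note $h(0)=(1,0)$, and invariance then also gives $+\infty\in\widetilde X$). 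So it suffices to prove that $\widetilde X=R(A)^\prime$ for some $A\subseteq\mathbb N$ if and only if these three conditions hold.

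Necessity is immediate: the derived set of any subset of a topological space is closed; $R(A)^\prime$ is reciprocal-invariant because $R(A)$ is and $x\mapsto 1/x$ is a self-homeomorphism of $[0,+\infty]$; and if $R(A)^\prime\ne\varnothing$ then $R(A)$, hence $A$, is infinite, so fixing $a_0\in A$ and letting $b\to+\infty$ along $A$ forces $a_0/b\to 0$, whence $0\in R(A)^\prime$.

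For sufficiency I would distinguish three cases. If $\widetilde X=\varnothing$, any finite $A$ works; if $\widetilde X=\{0,+\infty\}$, take $A=\{2^n:n\ge1\}$, for which $R(A)=\{2^m:m\in\mathbb Z\}$ has derived set $\{0,+\infty\}$. In the remaining case $\widetilde X\cap(0,+\infty)\ne\varnothing$, fix a sequence $(y_k)_{k\ge1}$ that is dense in $\widetilde X\cap(0,+\infty)$ and in which each value recurs infinitely often, and set $A=\bigcup_{k\ge1}\{c_k,d_k\}$, where $(c_k)$ is an increasing sequence of positive integers, each $d_k$ is a positive integer chosen near $y_kc_k$ so that $d_k\ne c_k$, $d_k/c_k\to y_k$, and the numbers $d_k/c_k$ ($k\ge1$) are pairwise distinct, and $(c_k)$ grows fast enough that $\min\{c_{k+1},d_{k+1}\}\ge(k+1)\max\{c_k,d_k\}$ for all $k$.

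Then I would verify $R(A)^\prime=\widetilde X$ by splitting $R(A)=S\cup C$ into the ratios of two entries from the same pair, $S=\{1\}\cup\{d_k/c_k:k\ge1\}\cup\{c_k/d_k:k\ge1\}$, and the ratios of two entries from different pairs, $C$, and by using $(S\cup C)^\prime=S^\prime\cup C^\prime$. The growth condition forces every ratio in $C$ involving the $k$-th pair to lie outside $[1/k,k]$, so $C^\prime=\{0,+\infty\}$. Because $d_k/c_k\to y_k$, the $d_k/c_k$ are distinct, and each value of $(y_k)$ recurs infinitely often, the derived set of $\{d_k/c_k\}$ equals $\overline{\{y_k:k\ge1\}}$, and that of $\{c_k/d_k\}$ is sandwiched between $\overline{\{1/y_k:k\ge1\}}$ and $\widetilde X$; since $(y_k)$ is dense in $\widetilde X\cap(0,+\infty)$ and $\widetilde X$ is closed and reciprocal-invariant, these closures lie in $\widetilde X$ and contain $\widetilde X\cap(0,+\infty)$, so together with $C^\prime=\{0,+\infty\}\subseteq\widetilde X$ this yields $R(A)^\prime=S^\prime\cup C^\prime=\widetilde X$. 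The main obstacle throughout is ruling out spurious accumulation points, which is exactly what the rapid-growth condition on $(c_k)$ takes care of; repeating each target value infinitely often is what makes every point of $\widetilde X\cap(0,+\infty)$ the limit of infinitely many distinct ratios, and the bounded perturbation used in the choice of $d_k$ (needed only when $y_kc_k$ is an integer or $y_k$ is within $1/c_k$ of $1$) does not disturb $d_k/c_k\to y_k$.
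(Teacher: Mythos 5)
Your proof is correct, and it takes a genuinely different route from the paper: the paper does not prove Theorem~\ref{thm:ratioacc} directly at all, but attributes it to Bukor and T\'oth and obtains it as the $k=2$ case of Theorem~\ref{thm:acc} (using $D^2(A)^\prime = D^{\underline{2}}(A)^\prime$), whereas you give a self-contained construction in the $[0,+\infty]$ model. The two constructions are cousins but organized differently. You enumerate a dense sequence $(y_k)$ in $\widetilde X\cap(0,+\infty)$, build pairs $(c_k,d_k)$ with $d_k/c_k\approx y_k$, and impose the explicit separation $\min\{c_{k+1},d_{k+1}\}\ge (k+1)\max\{c_k,d_k\}$ so that cross-pair ratios accumulate only at $0$ and $+\infty$; you certify that each target is an accumulation point by repeating each value of $(y_k)$ infinitely often together with pairwise distinctness of the $d_k/c_k$. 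The paper instead scales by $m!$ (which plays exactly the role of your growth condition), forces injectivity of $m\mapsto\rho(\vect{c}^{(m)})$ via the shift $t^{(m)}$ (so no repetition of targets is needed), and absorbs your three-way case split --- which your construction genuinely needs, since $y_k\in\{0,+\infty\}$ would not yield integers $d_k\approx y_k c_k$ --- into a single uniform argument in which conditions \ref{item:ii} and \ref{item:iii} handle the degenerate directions. Your version is more elementary and arguably more transparent in dimension one; the paper's buys the generalization to all $k\ge 2$. Two small points of precision in your write-up: the perturbation of $d_k$ away from $\lfloor y_k c_k\rfloor$ is needed not only when $y_kc_k$ is an integer or $y_k$ is near $1$, but also to dodge the finitely many earlier values $d_j/c_j$; and that perturbation is of size $O(k)$ rather than uniformly bounded, which is still harmless since $k/c_k\to 0$ under your growth condition. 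Neither affects correctness.
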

Note that Theorem~\ref{thm:ratioacc} holds also if $D^{2}(A)$ is replaced by $D^{\underline{2}}(A)$.
Indeed, $D^{\underline{2}}(A)\subseteq D^{2}(A) \subseteq D^{\underline{2}}(A) \cup \{\rho(1,1)\}$ and consequently $D^{2}(A)^\prime = D^{\underline{2}}(A)^\prime$.

Our first result generalizes Theorem~\ref{thm:ratioacc}.
Before stating it, we need to introduce some notation.
Let $\vect{x} = (x_1, \dots, x_k) \in S^{k-1}$.
For every permutation $\pi$ of $\{1,\dots,k\}$, we put \mbox{$\pi(\vect{x}) := (x_{\pi(1)}, \dots, x_{\pi(k)})$.}
Also, for every $I \subseteq \{1,\dots,k\}$, we say that $I$ \emph{meets} $\vect{x}$ if there exists $j \in I$ such that $x_j \neq 0$.
In such a case, we put $\rho_{I}(\vect{x}) := \rho(\vect{y})$, where $\vect{y} = (y_1, \dots, y_k)$ is defined by $y_i := x_{i}$ if $i \in I$, and $y_i := 0$ for $i \notin I$. 
(This is well defined since $\vect{y} \neq \vect{0}$.)

Our first result is the following:

\begin{thm}\label{thm:acc}
Let $X \subseteq S^{k - 1}$ for some integer $k \geq 2$.
Then there exists $A \subseteq \mathbb{N}$ such that $X = D^{\underline{k}}(A)^\prime$ if and only if the following conditions are satisfied:
\begin{enumerate}[label={\rm \textup{(}\roman*\textup{)}}]
\item \label{item:i} $X$ is closed;
\item \label{item:ii} $\vect{x} \in X$ implies $\pi(\vect{x}) \in X$, for every permutation $\pi$ of $\{1, \dots, k\}$;
\item \label{item:iii} $\vect{x} \in X$ implies $\rho_I(\vect{x}) \in X$, for every $I \subseteq \{1, \dots, k\}$ that meets $\vect{x}$.
\end{enumerate}
\end{thm}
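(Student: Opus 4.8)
The plan is to prove the two directions separately. For the necessity of conditions \ref{item:i}--\ref{item:iii}, suppose $X = D^{\underline{k}}(A)^\prime$ for some $A \subseteq \mathbb{N}$. Condition \ref{item:i} is immediate since the set of accumulation points of any subset of a topological space is closed. For \ref{item:ii}, observe that $A^{\underline{k}}$ is invariant under coordinate permutations, hence so is $D^{\underline{k}}(A)$, and the operation $\pi$ is a homeomorphism of $S^{k-1}$, so it maps accumulation points to accumulation points. For \ref{item:iii}, the idea is that if $\vect{x} \in X$ and $I$ meets $\vect{x}$, then one can take a sequence $\vect{a}^{(n)} \in A^{\underline{k}}$ with $\rho(\vect{a}^{(n)}) \to \vect{x}$; the entries $a_j^{(n)}$ with $j \in I$ and $x_j \neq 0$ tend to infinity in a controlled way while those with $j \notin I$ are comparatively negligible (when $x_j = 0$), but to land exactly on $\rho_I(\vect{x})$ one instead fixes the $I$-entries along a suitable subsequence and lets the complementary entries grow or shrink — more carefully, one extracts from $A$ the values appearing in the $I$-coordinates and builds new $k$-tuples whose non-$I$ coordinates are taken from a slowly growing part of $A$, forcing those coordinates to vanish in the limit while the $I$-coordinates reproduce $\rho_I(\vect{x})$. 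Some care is needed to keep all $k$ entries pairwise distinct, which is where working with $A^{\underline{k}}$ rather than $A^k$ matters; since $A$ must be infinite (else $X = \emptyset$ and there is nothing to prove), one always has room to choose distinct auxiliary entries.

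For the sufficiency, assume $X \subseteq S^{k-1}$ satisfies \ref{item:i}--\ref{item:iii}; I must construct $A \subseteq \mathbb{N}$ with $D^{\underline{k}}(A)^\prime = X$. If $X = \emptyset$, take $A$ finite (or empty). Otherwise, the strategy is the standard one used in the Bukor--T\'oth-type arguments: enumerate a countable dense subset $\{\vect{x}^{(1)}, \vect{x}^{(2)}, \dots\}$ of $X$ (possible since $X$ is a closed, hence compact, subset of the metric space $S^{k-1}$, so it is separable), and for each $\vect{x}^{(m)}$ and each "precision level" build a block of integers realizing a direction very close to $\vect{x}^{(m)}$. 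Concretely, for a target $\vect{x} = (x_1,\dots,x_k)$ with rational-approximation data, choose a large scaling parameter $N$ and set the $j$-th entry to be roughly $\lfloor N x_j \rfloor$ when $x_j > 0$; the coordinates with $x_j = 0$ must be handled using condition \ref{item:iii}, since a genuine $0$ entry is not allowed in $\mathbb{N}$ — but \ref{item:iii} guarantees that $\rho_I(\vect{x}) \in X$ for the support $I = \{j : x_j \neq 0\}$, so it suffices to approximate directions with full support and then note that the zero-pattern directions arise automatically as accumulation points by letting the "small" coordinates be chosen from a sparse tail of $A$. Interleaving these blocks across all $m$ and all precision levels, with the block scales growing fast enough that entries from different blocks never interfere and are pairwise distinct, yields $A$ with $X \subseteq D^{\underline{k}}(A)^\prime$.

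The reverse inclusion $D^{\underline{k}}(A)^\prime \subseteq X$ is the delicate half of the construction and I expect it to be the main obstacle. One must show that \emph{every} accumulation point of $D^{\underline{k}}(A)$ lies in $X$, not just the ones we deliberately planted. A tuple $\vect{a} \in A^{\underline{k}}$ either draws all its entries from a single block (in which case $\rho(\vect{a})$ is close to one of the $\vect{x}^{(m)} \in X$, and closeness improves with the block index) or draws entries from two or more distinct blocks. In the latter case, because block scales are separated by huge gaps, the entries coming from the lower-indexed blocks are negligible relative to those from the highest-indexed block involved, so $\rho(\vect{a})$ is close to $\rho_I(\vect{y})$ for the sub-tuple $\vect{y}$ from that top block and $I$ its index set — and by condition \ref{item:iii} together with \ref{item:ii}, $\rho_I$ of a near-$X$ point is near $X$. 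Making "negligible" and "near" quantitative and uniform, and checking that a convergent sequence of such tuples cannot accumulate outside $X$, requires a careful bookkeeping of the gap growth rate versus the approximation precision; closedness of $X$ then closes the argument. The only genuinely new feature compared to the $k=2$ case of Bukor--T\'oth is the need to track which subset $I$ of coordinates ``survives'' in each cross-block tuple and to invoke \ref{item:iii} for that $I$, so the combinatorial structure of the proof is an induction-free but notation-heavy elaboration of the classical one.
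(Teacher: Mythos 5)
Your overall strategy matches the paper's: necessity by direct manipulation of approximating sequences (for (iii), keep the $I$-coordinates and freeze the remaining ones at distinct elements of the infinite set $A$), and sufficiency by planting integer tuples at rapidly separated scales that approximate a countable dense subset of $X$, then arguing that cross-block tuples converge to $\rho_I$-images of planted directions. The necessity half is essentially right. In the sufficiency half, however, one reduction fails as stated. You claim that by (iii) ``it suffices to approximate directions with full support,'' the zero-pattern elements of $X$ then arising automatically. But (iii) only pushes elements of $X$ \emph{toward} the boundary; it does not say that every element of $X$ with a zero coordinate is $\rho_I$ of a full-support element of $X$. For $k=3$, the set $X$ of all permutations of $\rho(1,1,0)$ together with all permutations of $(1,0,0)$ satisfies (i)--(iii) and contains no full-support direction, so your recipe would plant nothing and yield $D^{\underline{3}}(A)^\prime=\emptyset\neq X$. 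The paper instead approximates every element $\vect{y}^{(m)}$ of the dense subset directly at scale $m!$, taking the $i$-th entry to be $\lfloor m!\,y^{(m)}_i\rfloor$ plus a positive correction of size $O(m)$; the correction makes zero coordinates legal, makes the $k$ entries pairwise distinct, and can be tuned so that $m\mapsto\rho(\vect{c}^{(m)})$ is injective (needed so the planted directions are genuine accumulation points rather than isolated repeated values).

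The inclusion $D^{\underline{k}}(A)^\prime\subseteq X$, which you correctly identify as the hard half, is left as an acknowledged to-do, and the heuristic you offer for it --- that the highest-indexed block involved dominates --- is not correct as stated. An entry $c^{(m)}_j$ with $y^{(m)}_j=0$ has size only $O(m)$, not $\Theta(m!)$, so an entry drawn from a very high-indexed block can be dominated by an entry from a much lower-indexed block whose target coordinate is nonzero. The paper's proof first splits the coordinates of a tuple according to whether the corresponding target coordinate vanishes, takes the maximal block index $m$ only among the nonvanishing ones, lets $I$ be the set of coordinates attaining that maximum (these have pairwise distinct $j_i$'s precisely because the tuple lies in $A^{\underline{k}}$), and shows the tuple's direction is close to $\rho_I(\pi(\vect{y}^{(m)}))\in X$ by (ii) and (iii). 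Without this case split the ``dominant block'' is not well defined and the bookkeeping you defer cannot be completed along the lines you describe.
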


Note that Theorem~\ref{thm:acc} is indeed a generalization of Theorem~\ref{thm:ratioacc}, since $\rho_{I}(\vect{x}) \in \{\vect{x}, (1, 0),(0, 1)\}$ for every $I \subseteq \{1,2\}$ that meets $\vect{x} \in S^1$.
Furthermore, for $k \geq 3$, Theorem~\ref{thm:acc} is false if $D^{\underline{k}}(A)$ is replaced by $D^k(A)$ (see Remark~\ref{rmk:false} below).

Now we turn our attention to the question of when $D^k(A)$ is dense in $S^{k-1}$.
First, we have the following easy proposition.

\begin{pro}\label{prop:denseness}
Let $k \geq 2$ be an integer and fix $A \subseteq \mathbb{N}$.
We have that $D^k(A)$ is dense in $S^{k-1}$ if and only if $D^{\underline{k}}(A)$ is dense in $S^{k-1}$.
\end{pro}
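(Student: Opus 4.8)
The plan is to treat the two implications separately. One direction is free: from the inclusion $D^{\underline{k}}(A) \subseteq D^{k}(A)$ it follows at once that if $D^{\underline{k}}(A)$ is dense in $S^{k-1}$, then so is $D^{k}(A)$. Hence all of the content lies in the converse, and from now on I would assume that $D^{k}(A)$ is dense in $S^{k-1}$.

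The core step I would carry out first is the following claim: \emph{every} point $\vect{y} \in S^{k-1}$ whose coordinates are pairwise distinct lies in the closure of $D^{\underline{k}}(A)$. To prove it, fix such a $\vect{y}$, put $\delta := \min_{i \neq j} |y_i - y_j| > 0$, and let $\varepsilon \in (0, \delta/3]$. By density of $D^{k}(A)$ there exists $\vect{a} \in A^{k}$ with $\|\rho(\vect{a}) - \vect{y}\| < \varepsilon$. Since the Euclidean norm dominates the sup norm, for all $i \neq j$ one obtains $|\rho(\vect{a})_i - \rho(\vect{a})_j| \geq |y_i - y_j| - 2\|\rho(\vect{a}) - \vect{y}\| > \delta - 2\varepsilon \geq \delta/3 > 0$; in particular the entries of $\rho(\vect{a})$ are pairwise distinct, and since $\rho$ merely rescales $\vect{a}$, so are the entries of $\vect{a}$. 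Thus $\vect{a} \in A^{\underline{k}}$ and $\rho(\vect{a}) \in D^{\underline{k}}(A)$ lies within $\varepsilon$ of $\vect{y}$; letting $\varepsilon \to 0^{+}$ proves the claim.

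Next I would observe that the set $T := \{\vect{x} \in S^{k-1} : x_i \neq x_j \text{ for all } i \neq j\}$ is dense in $S^{k-1}$. Indeed $S^{k-1} \setminus T$ is the union, over the finitely many pairs $i < j$, of the sets $\{\vect{x} \in S^{k-1} : x_i = x_j\}$, each of which is nowhere dense in $S^{k-1}$; equivalently, any $\vect{x} \in S^{k-1}$ is approximated by elements of $T$ upon perturbing the coordinates inside each block of equal entries by small pairwise distinct amounts and renormalizing. Combining this with the claim of the previous paragraph gives $\overline{D^{\underline{k}}(A)} \supseteq \overline{T} = S^{k-1}$, that is, $D^{\underline{k}}(A)$ is dense in $S^{k-1}$, which completes the proof.

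I do not expect a genuine obstacle here; the only point requiring a little care is the density of $T$ when $\vect{x}$ has several vanishing coordinates---then one must push the repeated zeros upward to distinct positive values rather than perturb symmetrically---but this is routine, and everything else reduces to a direct $\varepsilon$-estimate together with the trivial remark that $\rho(\vect{a})$ has a repeated coordinate precisely when $\vect{a}$ does.
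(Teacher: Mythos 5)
Your proof is correct and follows essentially the same route as the paper: both reduce the nontrivial implication to approximating the points of $S^{k-1}$ with pairwise distinct coordinates, noting that approximants of such a point eventually lie in $A^{\underline{k}}$, and then using the density of those points in $S^{k-1}$. Your version merely makes explicit the $\varepsilon$-estimate and the density of $T$ that the paper leaves implicit.
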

\begin{proof}
On the one hand, since $D^{\underline{k}}(A) \subseteq D^k(A)$, if $D^{\underline{k}}(A)$ is dense in $S^{k-1}$ then $D^{k}(A)$ is dense in $S^{k-1}$.
On the other hand, suppose that $D^{k}(A)$ is dense in $S^{k-1}$. 
Then, for every $\vect{x} \in S^{k-1} \cap \mathbb{R}^{\underline{k}}$, there exists $\vect{a}^{(n)} \in A^k$ such that $\rho(\vect{a}^{(n)}) \to \vect{x}$.
Consequently, for all sufficiently large $n$ we have $\vect{a}^{(n)} \in A^{\underline{k}}$.
This implies that $D^{\underline{k}}(A)$ is dense in $S^{k-1} \cap \mathbb{R}^{\underline{k}}$.
Since $S^{k-1} \cap \mathbb{R}^{\underline{k}}$ is dense in $S^{k-1}$, we get that $D^{\underline{k}}(A)$ is dense in $S^{k-1}$, as desired.
\end{proof}

The next result shows that if $D^k(A)$ is dense in $S^{k-1}$, for some integer $k \geq 3$ and $A \subseteq \mathbb{N}$, then $D^{k-1}(A)$ is dense in $S^{k - 2}$, but the opposite implication is false.

\begin{thm}\label{thm:chain}
Let $k \geq 3$ be an integer.
On the one hand, if $D^k(A)$ is dense in $S^{k-1}$, for some $A \subseteq \mathbb{N}$, then $D^{k-1}(A)$ is dense in $S^{k-2}$.
On the other hand, there exists $A \subseteq \mathbb{N}$ such that $D^k(A)$ is not dense in $S^{k-1}$ but $D^{k-1}(A)$ is dense in $S^{k-2}$.
\end{thm}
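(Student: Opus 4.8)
The plan is to prove the two assertions separately: the first is a short limiting argument, while the second requires an explicit construction.

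\medskip
\noindent\textbf{The forward implication.}
Fix $\vect{y} = (y_1, \dots, y_{k-1}) \in S^{k-2}$ and form its lift $(\vect{y}, 0) := (y_1, \dots, y_{k-1}, 0) \in S^{k-1}$. Since $D^k(A)$ is dense in $S^{k-1}$, there are $\vect{a}^{(n)} = (a^{(n)}_1, \dots, a^{(n)}_k) \in A^k$ with $\rho(\vect{a}^{(n)}) \to (\vect{y}, 0)$. Put $\vect{b}^{(n)} := (a^{(n)}_1, \dots, a^{(n)}_{k-1}) \in A^{k-1}$; as $\|\vect{y}\| = 1$, some $y_i > 0$, so $\vect{b}^{(n)} \neq \vect{0}$ for all large $n$ and $\rho(\vect{b}^{(n)})$ is defined. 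The only thing to check is that $\|\vect{b}^{(n)}\| / \|\vect{a}^{(n)}\| = \sqrt{1 - (\rho(\vect{a}^{(n)})_k)^2} \to 1$, after which $\rho(\vect{b}^{(n)}) = (\|\vect{a}^{(n)}\| / \|\vect{b}^{(n)}\|)\,(\rho(\vect{a}^{(n)})_1, \dots, \rho(\vect{a}^{(n)})_{k-1}) \to \vect{y}$. Hence $\vect{y} \in \overline{D^{k-1}(A)}$, and since $\vect{y}$ was arbitrary, $D^{k-1}(A)$ is dense in $S^{k-2}$.

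\medskip
\noindent\textbf{The counterexample.}
For the second assertion I would build $A = \bigsqcup_{n \geq 1} A_n$ as a disjoint union of blocks with $|A_n| = k - 1$, chosen so that: (a) the blocks have \emph{super-increasing scales}, namely $\min A_{n+1} > (\max A_n)^2$ for every $n$, with $\max A_1$ as large as we please (this alone forces the blocks to be pairwise disjoint); and (b) the set $\{\rho(\vect{a}) : \vect{a}\text{ is an ordering of }A_n\text{ for some }n\}$ is dense in $S^{k-2}$. For (b), fix a sequence $(\vect{d}^{(n)})_{n \geq 1}$ whose terms are dense in $S^{k-2}$ and have positive, pairwise distinct coordinates; since $(k-1)$-tuples of positive rationals are dense and any sufficiently good approximation of a tuple with distinct entries again has distinct entries, one can pick a $(k-1)$-tuple $\vect{m}^{(n)}$ of distinct positive integers with $\|\rho(\vect{m}^{(n)}) - \vect{d}^{(n)}\| \leq 1/n$; finally rescale each $\vect{m}^{(n)}$ by a large integer factor so that the resulting block $A_n$ fulfils~(a).

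\medskip
\noindent\textbf{Verification.}
Density of $D^{k-1}(A)$ in $S^{k-2}$ follows at once from (b), since the within-block $(k-1)$-tuples alone realize the dense set $\{\rho(\vect{m}^{(n)}) : n \geq 1\}$. To see that $D^k(A)$ is \emph{not} dense, fix $\vect{v} := \delta(1, 2, \dots, k) \in S^{k-1}$ with $\delta := (1^2 + \cdots + k^2)^{-1/2}$, so that $v_i \geq \delta$ for every $i$ and $|v_i - v_j| \geq \delta$ for $i \neq j$, and arrange $\max A_1 > 2/\delta$. Take any $\vect{a} \in A^k$. If all entries of $\vect{a}$ lie in a single block $A_n$, then $|A_n| = k - 1 < k$ forces two coordinates of $\rho(\vect{a})$ to coincide, whereas the corresponding two coordinates of $\vect{v}$ differ by at least $\delta$; hence $\|\rho(\vect{a}) - \vect{v}\| \geq \delta/2$. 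Otherwise the entries of $\vect{a}$ lie in at least two blocks; letting $A_m$ be the one of smallest index, some entry lies in a block $A_{m'}$ with $m' > m$, so $\|\vect{a}\| \geq \min A_{m'} > (\max A_m)^2$ while the entry in $A_m$ is $\leq \max A_m$, and therefore $\rho(\vect{a})$ has a coordinate $< 1/\max A_m \leq 1/\max A_1 < \delta/2$, whereas every coordinate of $\vect{v}$ is $\geq \delta$; again $\|\rho(\vect{a}) - \vect{v}\| \geq \delta/2$. Thus $\vect{v} \notin \overline{D^k(A)}$, so $D^k(A)$ is not dense in $S^{k-1}$.

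\medskip
\noindent The one place that needs care is the bookkeeping in the construction of $A$: realizing a dense set of $(k-1)$-directions via orderings of the blocks while simultaneously imposing the super-increasing scale condition. Everything else reduces to the two short distance estimates above.
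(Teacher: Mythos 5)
Your proof is correct, and the two halves should be judged separately. The forward implication is exactly the paper's argument: approximate $\rho(y_1,\dots,y_{k-1},0)$ by $\rho(\vect{a}^{(n)})$ and drop the last coordinate. For the counterexample, however, you take a genuinely different route. The paper does no explicit construction: it applies Theorem~\ref{thm:acc} to $X := \{\vect{x} \in S^{k-1} : x_i = 0 \text{ for some } i\}$, which satisfies conditions \ref{item:i}--\ref{item:iii}, to get $A$ with $D^{\underline{k}}(A)^\prime = X$; since $X \neq S^{k-1}$ and $S^{k-1}$ is perfect, $D^{\underline{k}}(A)$ --- hence, by Proposition~\ref{prop:denseness}, $D^k(A)$ --- is not dense, while density of $D^{k-1}(A)$ follows because every $\rho(y_1,\dots,y_{k-1},0)$ lies in $X = D^{\underline{k}}(A)^\prime$ and the first-half argument applies. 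Your block construction ($|A_n| = k-1$ with super-increasing scales) is self-contained and more elementary: it bypasses Theorem~\ref{thm:acc} entirely (whose proof is the bulk of the paper) and even exhibits an explicit point $\vect{v} = \delta(1,2,\dots,k)$ uniformly bounded away from $D^k(A)$. Both of your distance estimates are sound: the pigeonhole within a single block forces two equal coordinates of $\rho(\vect{a})$, one of which must then be at distance at least $\delta/2$ from the corresponding coordinate of $\vect{v}$; and the scale gap forces a coordinate below $1/\max A_1 < \delta/2$ when two blocks are involved. The only bookkeeping worth making explicit is the density in (b): since a dense subset of the perfect space $S^{k-2}$ meets every open ball in infinitely many points, each neighborhood contains $\vect{d}^{(n)}$ for arbitrarily large $n$, so the $1/n$ approximation error does not spoil density (alternatively, repeat each element of a countable dense set infinitely often in the sequence $(\vect{d}^{(n)})$). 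With that noted, the argument is complete.
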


We also provide a simple sufficient condition for $D^k(A)$ to be dense in $S^{k-1}$.

\begin{thm}\label{thm:limit1}
Let $A \subseteq \mathbb{N}$.
If there exists an increasing sequence $a_n \in A$ such that \mbox{$a_{n-1} / a_n \to 1$}, then $D^k(A)$ is dense in $S^{k-1}$ for every integer $k \geq 2$.
\end{thm}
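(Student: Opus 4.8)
The plan is to prove density of $D^k(A)$ directly, by showing that an arbitrary $\vect{x} = (x_1, \dots, x_k) \in S^{k-1}$ can be approximated to within any prescribed error by some $\rho(\vect{a})$ with $\vect{a} \in A^k$. Since $D^k(A)$ is invariant under permutations of the coordinates (because $\rho(\pi(\vect{a})) = \pi(\rho(\vect{a}))$) and $\|\vect{x}\| = 1$ forces $\max_i x_i > 0$, we may assume $x_k = \max_i x_i$. Put $t_i := x_i / x_k \in [0,1]$ for $i = 1, \dots, k$, so that $t_k = 1$ and $\rho\big((t_1, \dots, t_k)\big) = \vect{x}$, the latter because $(t_1, \dots, t_k) = \vect{x}/x_k$ and $\|\vect{x}\| = 1$. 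The goal is then to build a tuple $\vect{a} = (a_{n_1}, \dots, a_{n_k}) \in A^k$ with $a_{n_k} = a_N$ large and each $a_{n_i}/a_N$ close to $t_i$; then $\rho(\vect{a}) = \rho(\vect{a}/a_N)$ will be close to $\rho\big((t_1, \dots, t_k)\big) = \vect{x}$.

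The crux is a one-dimensional ``fine mesh'' fact: for every $\varepsilon > 0$ there is an integer $N$ such that every $t \in [0,1]$ satisfies $|a_m/a_N - t| < \varepsilon$ for some integer $m \leq N$. To see this, given $\varepsilon > 0$, first choose $M$ with $1 - a_{m-1}/a_m < \varepsilon$ for all $m > M$, which is possible since $a_{m-1}/a_m \to 1$; then, since $a_N \to \infty$, choose $N > M$ with $a_M/a_N < \varepsilon$. Writing $r_m := a_m/a_N$ for $M \leq m \leq N$, we have $r_M < \varepsilon$, $r_N = 1$, and $r_m - r_{m-1} = r_m\big(1 - a_{m-1}/a_m\big) < \varepsilon$ for $M < m \leq N$ (using $r_m \leq r_N = 1$). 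Thus $r_M, r_{M+1}, \dots, r_N$ increases from below $\varepsilon$ up to $1$ in steps smaller than $\varepsilon$, hence is $\varepsilon$-dense in $[0,1]$.

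Now fix $\vect{x}$ and $\varepsilon > 0$ as above and apply the fine mesh fact with $\delta := \varepsilon / \sqrt{k}$, getting $N$. For $i = 1, \dots, k-1$ choose $n_i \leq N$ with $|a_{n_i}/a_N - t_i| < \delta$, and set $n_k := N$; then $\vect{a} := (a_{n_1}, \dots, a_{n_k}) \in A^k$ and $\|\vect{a}/a_N - (t_1, \dots, t_k)\| < \sqrt{k}\,\delta = \varepsilon$. Both $\vect{a}/a_N$ and $(t_1, \dots, t_k)$ have last coordinate $1$, and so does every point of the segment joining them; hence that segment lies in $\{\vect{y} : \|\vect{y}\| \geq 1\}$, and on such a segment $\rho$ is non-expansive because its Jacobian has operator norm $1/\|\vect{y}\| \leq 1$ there. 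Therefore $\|\rho(\vect{a}) - \vect{x}\| = \|\rho(\vect{a}/a_N) - \rho((t_1, \dots, t_k))\| \leq \|\vect{a}/a_N - (t_1, \dots, t_k)\| < \varepsilon$. As $\vect{x} \in S^{k-1}$ and $\varepsilon > 0$ were arbitrary, $D^k(A)$ is dense in $S^{k-1}$.

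I expect the only genuine difficulty to be the fine mesh fact, and even that is elementary. The minor technical care needed elsewhere is to normalize by the largest coordinate of $\vect{x}$ (equivalently, to use a single fixed ``pivot'' entry $a_N$), which keeps the approximating vectors in the region where $\rho$ is non-expansive. For $k = 2$ this recovers the classical observation that $a_{n-1}/a_n \to 1$ makes $R(A)$ dense in $[0,+\infty]$; the point here is that one and the same mesh approximates every coordinate at once.
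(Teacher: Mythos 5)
Your proof is correct and follows essentially the same idea as the paper's: exploit $a_{n-1}/a_n \to 1$ to approximate all $k$ coordinates simultaneously by terms of the sequence after a common rescaling. The only differences are presentational --- the paper scales by an integer $m \to +\infty$ and chooses $a_{m_i}$ minimal with $a_{m_i} > m x_i$, restricting to the dense set of $\vect{x}$ with all coordinates positive, whereas you scale by a pivot term $a_N$, prove an explicit $\varepsilon$-mesh lemma for the ratios $a_m/a_N$, and handle boundary points directly via the non-expansiveness of $\rho$ on $\{\vect{y} : \|\vect{y}\| \geq 1\}$.
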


The case $k = 2$ of Theorem~\ref{thm:limit1} was proved by Starni~\cite{MR1328019} (hereafter, we tacitly express all the results about $R(A)$ in terms of $D^2(A)$), who also showed that the condition is sufficient but not necessary.

Let $\mathbb{P}$ be the set of prime numbers.
It is known that $D^2(\mathbb{P})$ is dense in $S^1$~\cite{MR1197643,MR1328019} (see also~\cite{MR3115449, Preprint2, Preprint4, Preprint3} for similar results in number fields).
Let $p_n$ be the $n$th prime number.
As a consequence of the Prime Number Theorem, we have that $p_n \sim n \log n$~\cite[Theorem~8]{MR2445243}.
Hence, $p_{n-1} / p_n \to 1$ and thus Theorem~\ref{thm:limit1} yields the following:

\begin{cor}\label{cor:primes}
$D^k(\mathbb{P})$ is dense in $S^{k-1}$, for every integer $k \geq 2$.
\end{cor}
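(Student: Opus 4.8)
The plan is to obtain the statement as an immediate consequence of Theorem~\ref{thm:limit1}, so the only thing to verify is the existence of a suitable sequence of primes. First I would set $a_n := p_n$, the $n$th prime number, which is manifestly an increasing sequence with values in $\mathbb{P}$. It then remains to show that $p_{n-1}/p_n \to 1$ as $n \to \infty$. For this I would invoke the Prime Number Theorem in the asymptotic form $p_n \sim n\log n$ (as recorded in \cite[Theorem~8]{MR2445243}, already cited in the paragraph preceding the statement): decomposing
\[
\frac{p_{n-1}}{p_n} = \frac{p_{n-1}}{(n-1)\log(n-1)} \cdot \frac{(n-1)\log(n-1)}{n\log n} \cdot \frac{n\log n}{p_n},
\]
the outer two factors converge to $1$ by the asymptotic, while the middle factor converges to $1$ by an elementary limit computation, since $(n-1)\log(n-1) \sim n\log n$. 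Hence $p_{n-1}/p_n \to 1$, and applying Theorem~\ref{thm:limit1} with $A = \mathbb{P}$ gives that $D^k(\mathbb{P})$ is dense in $S^{k-1}$ for every integer $k \geq 2$, as claimed.

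I do not expect any genuine obstacle: the entire argument is the reduction already sketched in the excerpt, and the sole external ingredient is the classical estimate $p_n \sim n\log n$. Should one prefer to bypass the full strength of the Prime Number Theorem, it would be enough to know that prime gaps are of smaller order than the primes themselves, i.e. $p_{n+1} - p_n = o(p_n)$, which again yields $p_{n-1}/p_n \to 1$; but since the excerpt has already committed to the $p_n \sim n\log n$ estimate, invoking it directly is the most economical route.
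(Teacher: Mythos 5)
Your proposal is correct and is exactly the argument the paper gives: apply Theorem~\ref{thm:limit1} to $A=\mathbb{P}$ after deducing $p_{n-1}/p_n\to 1$ from $p_n\sim n\log n$. The explicit three-factor decomposition and the remark about $p_{n+1}-p_n=o(p_n)$ are fine but add nothing beyond the paper's one-line deduction.
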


We leave the following questions to the interested readers:

\begin{que}
What is a simple characterization of the sets $X \subseteq S^{k-1}$, $k \geq 2$, such that there exists $A \subseteq \mathbb{N}$ satisfying $X = D^{k}(A)^\prime$~?
\end{que}

\begin{que}
Strauch and T\'{o}th~\cite{MR1659159} proved that if $A \subseteq \mathbb{N}$ has lower asymptotic density at least $1/2$, then $D^2(A)$ is dense in $S^1$.
Moreover, they showed that for every $\delta \in {[0, 1/2)}$ there exists some $A \subseteq \mathbb{N}$ with lower asymptotic density equal to $\delta$ and such that $D^2(A)$ is not dense in $S^1$.
How can these results be generalized to $D^k(A)$ with $k \geq 3$~?
\end{que}

\begin{que}
Bukor, \v{S}al\'{a}t, and T\'{o}th~\cite{MR1475512} proved that $\mathbb{N}$ can be partitioned into three sets $A$, $B$, $C$, such that none of $D^2(A)$, $D^2(B)$, $D^2(C)$ is dense in $S^1$.
Moreover, they showed that such a partition is impossible using only two sets.
How can these results be generalized to $D^k(A)$ with $k \geq 3$~?
\end{que}

\subsection*{Notation}
We use $\mathbb{N}$ to denote the set of positive integers.
We write vectors in bold and we use subscripts to denote their components, so that $\vect{x} = (x_1, \dots, x_k)$.
Also, we put $\|\vect{x}\| := \sqrt{x_1^2 + \cdots + x_k^2}$ for the Euclidean norm of $\vect{x}$.
If $X$ is a subset of a topological space $T$, then $X^\prime$ denotes the set of accumulation points of $X$.
Given a sequence $x^{(n)} \in T$, we write $x^{(n)} \accto x$ to mean that $x^{(n)} \to x$ as $n \to +\infty$ and $x^{(n)} \neq x$ for infinitely many $n$.

\section{Proof of Theorem~\ref{thm:acc}}

\textsc{Only If Part.} Suppose that $X = D^{\underline{k}}(A)^\prime$ for some $A \subseteq \mathbb{N}$.
We shall prove that $X$ satisfies \ref{item:i}--\ref{item:iii}.
Clearly, $X$ is closed, since it is a set of accumulation points.
Hence, \ref{item:i} holds.
Pick $\vect{x} \in X$.
Then there exists a sequence $\vect{a}^{(n)} \in A^{\underline{k}}$ such that $\rho(\vect{a}^{(n)}) \accto \vect{x}$.
In~particular, this implies that $\|\vect{a}^{(n)}\| \to +\infty$ and that $A$ is infinite. 
Let $\pi$ be a permutation of $\{1, \dots, k\}$.
Setting $\vect{b}^{(n)} := \pi(\vect{a}^{(n)})$, it follows easily that $\vect{b}^{(n)} \in A^{\underline{k}}$ and $\rho(\vect{b}^{(n)}) \accto \pi(\vect{x})$.
Consequently, $\pi(\vect{x}) \in X$ and \ref{item:ii} holds.
Finally, assume that $I \subseteq \{1, \dots, k\}$ meets $\vect{x}$.
Up to passing to a subsequence of $\vect{a}^{(n)}$, we can assume that each sequence $a_i^{(n)}$, with $i \in \{1, \dots, k\}$, is nondecreasing.
Recalling that $A$ is infinite, this implies that we can fix $k - \#I$ distinct $c_i \in A$, with $i \in \{1, \dots, k\} \setminus I$, such that $\vect{d}^{(n)} \in A^{\underline{k}}$ for every sufficiently large $n \in \mathbb{N}$, where $\vect{d}^{(n)} \in \mathbb{N}^k$ is defined by $d_i^{(n)} := a_i^{(n)}$ if $i \in I$, and $d_i^{(n)} := c_i$ if $i \notin I$.
Since $I$ meets $\vect{x}$, there exists $j \in I$ such that $x_j \neq 0$, which in turn implies that $a_j^{(n)} \to +\infty$ and consequently $\|\vect{d}^{(n)}\| \to +\infty$.
At this point, it follows easily that $\rho(\vect{d}^{(n)}) \accto \rho_I(\vect{x})$.
Hence, $\rho_I(\vect{x}) \in X$ and \ref{item:iii} holds too.

\medskip

\textsc{If Part.} Suppose that $X \subseteq S^{k-1}$ satisfies \ref{item:i}--\ref{item:iii}.
We shall prove that there exists $A \subseteq \mathbb{N}$ such that $X = D^{\underline{k}}(A)^\prime$.
Since $X$ is a closed subset of $S^{k-1}$, we have that $X$ has a countable dense subset, say $Y := \{\vect{y}^{(m)} : m \in \mathbb{N}\}$.
\begin{claim}\label{claim:sequencecm}
There exists a sequence $\vect{c}^{(m)}$ such that:
\begin{enumerate}[label={\rm (\textsc{c}\arabic{*})}]
\item \label{item:c1} $\vect{c}^{(m)}\in \mathbb{N}^{\underline{k}}$ for every $m \in \mathbb{N}$;
\item \label{item:c2}  $m \mapsto \rho(\vect{c}^{(m)})$ is an injection;
\item \label{item:c3}  $\big|\frac1{m!}c_i^{(m)} - y_i^{(m)}\big| \to 0$, for every $i \in \{1, \dots, k\}$;
\item \label{item:c4} $\big\|\rho(\vect{c}^{(m)}) - \vect{y}^{(m)}\big\| \to 0$.
\end{enumerate}
\end{claim}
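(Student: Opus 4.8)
The plan is to build $\vect{c}^{(m)}$ by induction on $m$, forcing each $\vect{c}^{(m)}$ to be an integer point extremely close to $m!\,\vect{y}^{(m)}$ and then exploiting the abundance of such integer points to meet the remaining requirements. (If $X=\emptyset$ there is nothing to prove, so assume $X\neq\emptyset$.)

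The key point is that \ref{item:c3} and \ref{item:c4} hold for \emph{any} reasonable choice of $\vect{c}^{(m)}$. Indeed, if $\vect{w}\in\mathbb{N}^k$ satisfies $|w_i-m!\,y_i^{(m)}|\le(m-1)!$ for every $i$, then $\big|\tfrac{1}{m!}w_i-y_i^{(m)}\big|\le\tfrac1m\to0$, which gives \ref{item:c3}; moreover, writing $\vect{w}=m!\,\vect{y}^{(m)}+\vect{\epsilon}$ with $\|\vect{\epsilon}\|\le\sqrt{k}\,(m-1)!$, the identity $\|\vect{y}^{(m)}\|=1$ and the (reverse) triangle inequality give $\|\vect{w}\|\ge m!-\sqrt{k}\,(m-1)!$ and
\[
\big\|\rho(\vect{w})-\vect{y}^{(m)}\big\|=\frac{\big\|\vect{\epsilon}+\big(m!-\|\vect{w}\|\big)\vect{y}^{(m)}\big\|}{\|\vect{w}\|}\le\frac{2\sqrt{k}\,(m-1)!}{m!-\sqrt{k}\,(m-1)!}=\frac{2\sqrt{k}}{m-\sqrt{k}}\longrightarrow0 ,
\]
which gives \ref{item:c4}. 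Hence I would set $Q_m:=\prod_{i=1}^{k}\big(m!\,y_i^{(m)},\,m!\,y_i^{(m)}+(m-1)!\big)$, an open box contained in the positive orthant (since $\vect{y}^{(m)}\in[0,1]^k$), and take $\vect{c}^{(m)}$ to be any of its integer points; it then remains only to pick one with pairwise distinct coordinates (giving \ref{item:c1}) whose direction avoids the finite set $\{\rho(\vect{c}^{(j)}):j<m\}$ (giving \ref{item:c2}, by induction).

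The heart of the proof is the lattice-point count that makes this selection possible. The box $Q_m$ contains $\asymp\big((m-1)!\big)^{k}$ integer points. Among them, those with two equal coordinates lie on the $\binom{k}{2}$ hyperplanes $\{w_i=w_j\}$ and number $O\big(((m-1)!)^{k-1}\big)$; those whose direction belongs to $\{\rho(\vect{c}^{(j)}):j<m\}$ lie on at most $m-1$ rays through the origin, and since the direction of an element of $\mathbb{N}^{\underline{k}}$ is represented by a primitive integer vector (of norm $\ge1$), each such ray meets the box $Q_m$, of diameter $\sqrt{k}\,(m-1)!$, in $O\big((m-1)!\big)$ integer points, for a total of $O\big(m\,(m-1)!\big)=O(m!)$. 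Since $k\ge2$, both $((m-1)!)^{k-1}$ and $m!$ are $o\big(((m-1)!)^{k}\big)$, so for all sufficiently large $m$ the box $Q_m$ contains an integer point $\vect{c}^{(m)}$ with pairwise distinct positive coordinates and $\rho(\vect{c}^{(m)})\notin\{\rho(\vect{c}^{(j)}):j<m\}$. For the finitely many small $m$ below this threshold I would just set $\vect{c}^{(m)}:=(1,2,\dots,k-1,k+m)$, which has pairwise distinct positive entries and, having first coordinate $1$, direction depending injectively on $m$; this is harmless, since the large-$m$ choices avoid all earlier directions and these finitely many terms do not affect the limits in \ref{item:c3}--\ref{item:c4}. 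Running the induction produces the desired sequence.

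The main obstacle is \ref{item:c2}: one must keep the directions $\rho(\vect{c}^{(m)})$ pairwise distinct while each $\vect{c}^{(m)}$ is pinned to within $(m-1)!$ of the prescribed target $m!\,\vect{y}^{(m)}$. Perturbing the target inside $X$ is not an option (the enumeration $\vect{y}^{(m)}$ may repeat an isolated point of $X$), and shortcuts such as forcing $\vect{c}^{(m)}$ to be primitive or shifting a single coordinate do not obviously come with a controlled correction. The lattice-point count above is exactly what resolves this, and as a bonus it disposes of \ref{item:c1} simultaneously; the remaining verifications, namely the estimates behind \ref{item:c3} and \ref{item:c4}, are routine.
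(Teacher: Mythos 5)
Your proof is correct. The overall skeleton coincides with the paper's: both take $\vect{c}^{(m)}$ to be an integer point within $o(m!)$ of $m!\,\vect{y}^{(m)}$, which is all that is needed for (\textsc{c}3) and (\textsc{c}4), and both then work to secure (\textsc{c}1) and (\textsc{c}2). Where you genuinely diverge is in the mechanism for (\textsc{c}2), which you rightly single out as the crux. The paper perturbs additively and explicitly, setting $c_i^{(m)} = \lfloor m!\,y_i^{(m)}\rfloor + s_i^{(m)} + t^{(m)}$ with $\vect{s}^{(m)} \in \{1,\dots,k\}^k$ chosen to make the coordinates pairwise distinct and then a \emph{common} shift $t^{(m)} \in \{1,\dots,m\}$ chosen so that $c_1^{(m)}/c_2^{(m)}$ avoids the $m-1$ earlier values of that ratio; the key observation is that $t \mapsto (u+t)/(v+t)$ is injective for $u \neq v$, so each forbidden ratio excludes at most one value of $t$. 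You instead allow a much larger perturbation window (a box of side $(m-1)!$) and win by counting: the bad lattice points, lying on the $\binom{k}{2}$ diagonal hyperplanes or on the $m-1$ forbidden rays, number $O\big(((m-1)!)^{k-1} + m!\big)$, which is $o\big(((m-1)!)^{k}\big)$ since $k \geq 2$. Both arguments are sound. The paper's is more economical (a perturbation of size $O(m)$, no asymptotics, and it only ever inspects one ratio), while yours is somewhat more robust: the pigeonhole count would absorb additional finite lists of forbidden directions or coordinate constraints without modification. Your explicit fallback for the finitely many small $m$, and the observation that it cannot disturb the limits in (\textsc{c}3)--(\textsc{c}4), are also fine.
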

\begin{proof}
For every $m \in \mathbb{N}$ and $i \in \{1, \dots, k\}$, we define $c_i^{(m)} := \lfloor m!\,y_i^{(m)} \rfloor + s_i^{(m)} + t^{(m)}$, where $\vect{s}^{(m)} \in \mathbb{N}^{k}$ and $t^{(m)} \in \mathbb{N}$ will be chosen later.
For each $m \in \mathbb{N}$, it is easy to see that we can choose $\vect{s}^{(m)} \in \{1, \dots, k\}^k$ such that $\vect{c}^{(m)} \in \mathbb{N}^{\underline{k}}$. (Note that this property does not depend on $t^{(m)}$.)
We make this choice so that \ref{item:c1} holds.
Now note that for every fixed $u,v \in \mathbb{R}^+$, with $u \neq v$, the function $\mathbb{R}^+ \to \mathbb{R} : t \mapsto \frac{u+t}{v+t}$ is injective.
Therefore, for each $m \in \mathbb{N}$ we can choose $t^{(m)} \in \{1, \dots, m\}$ such that $c_1^{(m)} / c_2^{(m)} \neq c_1^{(\ell)} / c_2^{(\ell)}$ for every positive integer $\ell < m$.
In turn, this choice implies that \ref{item:c2} holds.
At this point, both \ref{item:c3} and \ref{item:c4} follow easily.
\end{proof}

Define $A := \bigcup_{i=1}^k A_i$, where $A_i := \{c_i^{(m)} : m \in \mathbb{N}\}$ for every $i \in \{1,\dots,k\}$.
We claim that $X = D^{\underline{k}}(A)^\prime$.

\medskip

First, let us prove that $X \subseteq D^{\underline{k}}(A)^\prime$.
Pick some $\vect{x} \in X$.
Since $Y$ is a dense subset of $X$, there exists an increasing sequence of positive integers $(m_n)_{n \in \mathbb{N}}$ such that $\vect{y}^{(m_n)} \to \vect{x}$.
By the definition of $A$ and by~\ref{item:c1}, we have that $\vect{c}^{(m_n)} \in A^{\underline{k}}$.
Moreover, \ref{item:c2} and \ref{item:c4} imply that $\rho(\vect{c}^{(m_n)}) \accto \vect{x}$.
Hence, we have $\vect{x} \in D^{\underline{k}}(A)^\prime$, as desired.

\medskip

Now let us prove that $D^{\underline{k}}(A)^\prime \subseteq X$.
Pick $\vect{x} \in D^{\underline{k}}(A)^\prime$.
Then there exists a sequence $\vect{a}^{(n)} \in A^{\underline{k}}$ such that $\rho(\vect{a}^{(n)}) \accto \vect{x}$.
Up to passing to a subsequence, we can assume that there exist some $j_1, \dots, j_k \in \{1,\dots,k\}$ such that $\vect{a}^{(n)} \in A_{j_1} \times \cdots \times A_{j_k}$ for every $n \in \mathbb{N}$.
In~turn, this implies that there exists a sequence $\vect{m}^{(n)} \in \mathbb{N}^k$ such that $a_i^{(n)} = c_{j_i}^{(m_i^{(n)})}$ for every $n \in \mathbb{N}$ and $i \in \{1, \dots, k\}$.
Thanks to \ref{item:ii}, without loss of generality, we can reorder the entries of $\vect{a}^{(n)}$.
Hence, up to reordering and up to passing to a subsequence, we can assume that there exists $h \in \{1, \dots, k\}$ such that $y_{j_1}^{(m_1^{(n)})},\ldots,y_{j_h}^{(m_h^{(n)})} \neq 0$ and $y_{j_{h+1}}^{(m_{h+1}^{(n)})} = \cdots = y_{j_k}^{(m_k^{(n)})} = 0$ for every $n \in \mathbb{N}$.
Similarly, again up to reordering and up to passing to a subsequence, we can assume that there exists $\ell \in \{1, \dots, h\}$ such that $m_1^{(n)} = \cdots = m_\ell^{(n)} > m_{\ell+1}^{(n)} \geq \cdots \geq m_{h}^{(n)}$ for every $n \in \mathbb{N}$.
In~particular, since $\vect{a}^{(n)} \in A^{\underline{k}}$ for every $n \in \mathbb{N}$, we get that $j_1, \dots, j_\ell$ are pairwise distinct.
Let $\pi$ be any permutation of $\{1, \dots, k\}$ such that $\pi(i) = j_i$ for all $i \in I := \{1, \dots, \ell\}$.
Note that $I$ meets $\pi(\vect{y}^{(m_1^{(n)})})$ for every $n \in \mathbb{N}$.
Put $\vect{z}^{(n)} := \rho_I(\pi(\vect{y}^{(m_1^{(n)})}))$ for every $n \in \mathbb{N}$.
Hence, by \ref{item:ii} and \ref{item:iii} we have that $\vect{z}^{(n)} \in X$ for every $n \in \mathbb{N}$.
Thanks to \ref{item:c3}, we have that $\big|\tfrac1{m_1^{(n)}!} a_i^{(n)} - y_{j_i}^{(m_1^{(n)})}\big| \to 0$ for each $i \in I$, and $\tfrac1{m_1^{(n)}!} a_i^{(n)} \to 0$ for each $i \in \{1, \ldots, k\} \setminus I$, as $n \to +\infty$.
As a consequence, $\big\|\rho(\vect{a}^{(n)}) - \vect{z}^{(n)}\big\| \to 0$, which in turn implies that $\vect{z}^{(n)} \to \vect{x}$.
Finally, since $X$ is closed by \ref{item:i}, we obtain that $\vect{x} \in X$, as desired.

The proof is complete.

\begin{rmk}\label{rmk:false}
We note that for $k \geq 3$ the statement of Theorem~\ref{thm:acc} is false if $D^{\underline{k}}(A)$ is replaced by $D^k(A)$.
In fact, fix an integer $k \geq 3$ and let $X$ be the subset of $S^{k-1}$ containing all the permutations of $\vect{\eta}:=\rho(1,\sqrt{2},0,\ldots,0)$ and $\rho(1,0,\ldots,0)$ (and nothing else).
It follows by Theorem~\ref{thm:acc} that there exists $A\subseteq \mathbb{N}$ such that $X = D^{\underline{k}}(A)^\prime$.
For the sake of contradiction, let us suppose that there exists $B\subseteq \mathbb{N}$ such that $X = D^k(B)^\prime$. 
Since $\vect{\eta} \in X$, there exists a sequence $\vect{b}^{(n)} \in B^k$ such that $\rho(\vect{b}^{(n)}) \accto \vect{\eta}$. 
Let $\vect{c}^{(n)} \in \mathbb{N}^k$ be the sequence defined by $c_i^{(n)}=b_1^{(n)}$ if $i \neq 2$, and $c_i^{(n)} := b_2^{(n)}$ if $i = 2$.
We obtain that $\vect{c}^{(n)} \in B^k$ and $\rho(\vect{c}^{(n)}) \accto \vect{\theta}$, where $\vect{\theta} := \rho(1,\sqrt{2},1,\ldots,1)$.
(Here we have used that $\eta_1 / \eta_2$ is irrational and consequently $\rho(\vect{c}^{(n)}) \neq \vect{\theta}$.)
Therefore, $\vect{\theta} \in D^k(B)^\prime = X$, which is a contradiction.
\end{rmk}

\section{Proof of Theorem~\ref{thm:chain}}

Let $k \geq 3$ be an integer and let $A \subseteq \mathbb{N}$.
Suppose that $D^k(A)$ is dense in $S^{k-1}$.
We shall prove that $D^{k-1}(A)$ is dense in $S^{k-2}$.
For every $\vect{x} \in S^{k-2}$, let $f_k(\vect{x}) \in S^{k - 1}$ be defined by $f_k(\vect{x}) := \rho(x_1, \dots, x_{k- 1}, 0)$.
Since $D^k(A)$ is dense in $S^{k-1}$, we have that there exists a sequence $\vect{a}^{(n)} \in A^k$ such that $\rho(\vect{a}^{(n)}) \to f_k(\vect{x})$.
In turn, this implies that $\rho(\vect{b}^{(n)}) \to \vect{x}$, where $\vect{b}^{(n)} \in A^{k-1}$ is defined by $b_i^{(n)} := a_i^{(n)}$ for $i \in \{1, \dots, k - 1\}$.
Hence, $D^{k-1}(A)$ is dense in $S^{k-2}$, as desired.

Now given an integer $k \geq 3$, we shall prove that there exists $A \subseteq \mathbb{N}$ such that $D^{k-1}(A)$ is dense in $S^{k - 2}$, but $D^k(A)$ is not dense in $S^{k-1}$.
Let $X := \{\vect{x} \in S^{k - 1} : x_i = 0 \text{ for some } i\}$.
Clearly, $X$ satisfies conditions \ref{item:i}--\ref{item:iii} of Theorem~\ref{thm:acc}, and consequently there exists $A \subseteq \mathbb{N}$ such that $D^{\underline{k}}(A)^\prime = X$.
Therefore, $D^{\underline{k}}(A)$ is not dense in $S^{k-1}$ and, in light of Proposition~\ref{prop:denseness}, $D^k(A)$ is not dense in $S^{k-1}$ as well.
Finally, for every $\vect{x} \in S^{k - 2}$ we have $f_k(\vect{x}) \in X$, and the same reasonings of the previous paragraph show that $D^{k - 1}(A)$ is dense in $S^{k - 2}$.

\section{Proof of Theorem~\ref{thm:limit1}}

Suppose that there exists an increasing sequence $a_n \in A$ such that $a_{n-1} / a_n \to 1$.
Fix an integer $k \geq 2$ and pick $\vect{x} \in S^{k-1}$ with $x_1, \dots, x_k > 0$.
Clearly, for every integer $m \geq a_1\, / \min\{x_1, \dots, x_k\}$ there exist integers $m_1, \dots, m_k \geq 2$ such that $a_{m_i - 1} \leq m x_i < a_{m_i}$ for each $i \in \{1, \dots, k\}$.
Hence, for every $i \in \{1, \dots, k\}$, we have that
\begin{equation*}
x_i < \frac{a_{m_i}}{m} \leq \frac{a_{m_i}}{a_{m_i-1}} \, x_i ,
\end{equation*}
which, since $m_i \to +\infty$ as $m \to +\infty$, yields that $a_{m_i} / m \to x_i$ as $m \to +\infty$.
Putting $\vect{a}^{(m)} := (a_{m_1}, \dots, a_{m_k})$, it follows that $\rho(\vect{a}^{(m)}) \to \vect{x}$.
Therefore, $D^k(A)$ is dense in $S^{k-1}$, as claimed.

\bibliographystyle{amsplain}
\bibliography{Directions}

\end{document}